\font\smallit=cmti10
\renewcommand\section{\@startsection {section}{1}{\z@}
{-30pt \@plus -1ex \@minus -.2ex}
{2.3ex \@plus.2ex}
{\normalfont\normalsize\bfseries\boldmath}}
\renewcommand\subsection{\@startsection{subsection}{2}{\z@}
{-3.25ex\@plus -1ex \@minus -.2ex}
{1.5ex \@plus .2ex}
{\normalfont\normalsize\bfseries\boldmath}}
\renewcommand{\@seccntformat}[1]{\csname the#1\endcsname. }
\newtheorem{theorem}{Theorem}
\newtheorem{lemma}{Lemma}
\newtheorem{proposition}{Proposition}
\theoremstyle{definition}
\newtheorem{remark}{Remark}
\theoremstyle{plain}
\theoremstyle{definition}
\newtheorem{example}[theorem]{Example}
\begin{document}

\begin{center}
\uppercase{\bf Combinatorial formulas for arithmetic density}
\vskip 20pt
{\bf Robert Schneider}\\
{\smallit Department of Mathematics, University of Georgia, Athens, Georgia, USA}\\
{\tt robertsc@mtu.edu}\\ 
\vskip 10pt
{\bf Andrew V. Sills}\\
{\smallit Department of Mathematical Sciences, Georgia Southern University
, Statesboro and Savannah, Georgia, USA}\\
{\tt ASills@GeorgiaSouthern.edu}\\ 
\end{center}
\vskip 30pt

\centerline{\bf Abstract}
\noindent
Let $d_S$ denote the arithmetic density of a subset $S \subseteq \mathbb N$. 
 We derive a power series in $q\in \mathbb C$, 
 $|q|<1$, with co\"efficients related to integer partitions and integer compositions,
 that yields $1/d_S$ in the limit as $q\to 1$ radially. 

%

\section{Introduction and statement of results}\label{Sect1}

In recent works \cite{OSW, OSW2}, Ono, Wagner, and the first author use methods from partition theory to prove 
$q$-series formulas for the arithmetic density $d_S$ of a subset 
$S$ of natural numbers $\mathbb N$: 
\begin{equation}
d_{S}:=\lim_{N \to \infty} \frac{\# \{ n \in S : n \leq N \}}{N}.\end{equation}
In this paper, we prove a combinatorial limiting formula for the reciprocal value $1/d_S$, using ideas related to both partitions and integer compositions.

Let $\mathcal P$ denote the set of {\it integer partitions}, unordered finite sums of natural numbers (see e.g. \cite{And}), including the empty partition $\emptyset\in \mathcal P$. Let $S\subseteq \mathbb N$, and let $\mathcal P_S$ denote the set of partitions whose parts lie in the subset $S$. 
For $\lambda \in \mathcal P$, let $|\lambda|\geq 0$ denote the {\it size} of the partition (sum of parts), let $\ell(\lambda)\geq 0$ denote the {\it length} (number of parts), and let $m_i=m_i(\lambda)\geq 0$ be the {\it multiplicity} (frequency) of $i\in \mathbb N$ as a part of partition $\lambda$.

Define the sum $C_S(n)$ over partitions $\lambda\in\mathcal P_S$ with size $|\lambda|\leq n$, noting $m_i(\lambda)=0$ if $i\not\in S$, by
\begin{equation}\label{sum}
C_S(n):=\sum_{\substack{\lambda \in \mathcal P_S\\ 0\leq |\lambda|\leq n}}\frac{(-1)^{\ell(\lambda)}\ell(\lambda)!}{m_{1}!m_{2}! m_{3}!\cdots m_{n}!}.\end{equation}
Sums over partitions have a history dating
back to work of MacMahon~\cite[p. 61ff.]{MacMahon} and Fine~\cite[\S22]{Fine}. They are important in modern number theory; the $q$-bracket of Bloch--Okounkov \cite{B-O, Zagier_bracket} is an operator from statistical physics that induces modularity in partition-theoretic $q$-series. 

Equation \eqref{sum} has a natural combinatorial interpretation in terms of {\it integer compositions}, which are ordered finite sums of natural numbers (see e.g. 
\cite[Section IV]{compositions}). 
Let $\mathcal C$ denote the set of all integer compositions. We will extend the partition terms and notations defined above to compositions, with the same meanings. 
Let $\mathcal C_S$ denote the compositions whose parts lie in $S\subseteq \mathbb N$. For each partition $\lambda$ in \eqref{sum}, $\ell(\lambda)!/m_1! m_2 ! m_3! \cdots$ counts the  multiset permutations of the parts of $\lambda$, i.e., all compositions $\gamma\in\mathcal C_S$ having  the same parts as $\lambda$.

\begin{proposition} $C_S(n)$ counts the number of {compositions} $\gamma$ in $\mathcal C_S$ of even length, minus those of odd length, having sizes $0\leq |\gamma|\leq n$:
$$C_S(n)=\sum_{\substack{\gamma\in \mathcal C_S\\ 0\leq |\gamma|\leq n}}(-1)^{\ell(\gamma)}.$$
\end{proposition}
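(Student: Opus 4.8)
The plan is to show that the partition sum in the definition of $C_S(n)$ reorganizes into a composition sum simply by expanding the multinomial coefficient. The key observation, already signposted in the text immediately preceding the proposition, is that for any fixed partition $\lambda \in \mathcal P_S$, the quantity $\ell(\lambda)!/(m_1! m_2! m_3! \cdots)$ is exactly the number of distinct orderings of the multiset of parts of $\lambda$, i.e. the number of compositions $\gamma \in \mathcal C_S$ whose underlying multiset of parts equals $\lambda$. Each such composition $\gamma$ has the same length $\ell(\gamma) = \ell(\lambda)$ and the same size $|\gamma| = |\lambda|$ as $\lambda$.

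First I would set up the natural map from compositions to partitions: every composition $\gamma \in \mathcal C_S$ determines a unique partition $\lambda(\gamma) \in \mathcal P_S$ by sorting its parts into nonincreasing order, and this map preserves both length and size. The fibers of this map over a partition $\lambda$ are precisely the multiset permutations of $\lambda$, of which there are $\ell(\lambda)!/(m_1! m_2! \cdots)$. I would note that the empty partition corresponds to the empty composition, so the $|\gamma|=0$ term is accounted for consistently with $\emptyset$.

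The main step is then to partition the composition sum according to these fibers. Writing
\begin{equation*}
\sum_{\substack{\gamma\in \mathcal C_S\\ 0\leq |\gamma|\leq n}}(-1)^{\ell(\gamma)}
=\sum_{\substack{\lambda \in \mathcal P_S\\ 0\leq |\lambda|\leq n}}\ \sum_{\substack{\gamma\in \mathcal C_S\\ \lambda(\gamma)=\lambda}}(-1)^{\ell(\gamma)},
\end{equation*}
I would observe that on each fiber the sign $(-1)^{\ell(\gamma)}=(-1)^{\ell(\lambda)}$ is constant, and the number of terms in the fiber is the multinomial count above. Hence the inner sum equals $(-1)^{\ell(\lambda)}\ell(\lambda)!/(m_1! m_2! \cdots m_n!)$, where the multiplicities beyond $n$ vanish because parts of $\lambda$ cannot exceed $|\lambda|\leq n$. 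Substituting this back recovers \eqref{sum} exactly.

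I do not anticipate a genuine obstacle here, as the result is a clean refinement/regrouping argument; the only points requiring care are bookkeeping ones. Specifically, I would verify that the multiset-permutation count is correctly the multinomial coefficient $\ell(\lambda)!/\prod_i m_i(\lambda)!$ (standard, but worth stating), and that truncating the denominator at $m_n!$ in \eqref{sum} is harmless since any part $i$ of a partition $\lambda$ with $|\lambda|\leq n$ satisfies $i\leq n$, so $m_i(\lambda)=0$ for $i>n$ and those factorials equal $1$. With these checked, the equality of the two expressions for $C_S(n)$ follows immediately.
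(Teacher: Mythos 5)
Your proof is correct and takes essentially the same approach as the paper, which justifies the proposition by the one observation that $\ell(\lambda)!/(m_1!\,m_2!\cdots)$ counts the multiset permutations of the parts of $\lambda$, i.e.\ the compositions in $\mathcal C_S$ with the same parts, so the composition sum regroups by underlying partition. Your fiber-by-fiber bookkeeping (constant sign and size on each fiber, vanishing multiplicities beyond $n$) just makes explicit what the paper leaves implicit.
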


Let $F_S(q):=\sum_{n\geq 0} C_S(n)q^n$ with domain of convergence depending on $S\subseteq \mathbb N$; and define the auxiliary series $f_S(q):= 1+\sum_{n\in S}q^n, |q|<1$. Our focus is on the behavior of 
$F_S(q)$ as $q\to 1$.

\begin{theorem}\label{thm1}  Let $S\subseteq \mathbb N$ such that $d_S> 0$, and such that $f_S(q)=1+\sum_{n\in S}q^n$ is analytic and has no zeros on $\{q\in \mathbb C : |q|<1\}$. Then as $q\to 1$ radially, we have
$$\lim_{q\to 1}F_S(q)\  =\  \lim_{q\to 1}\  \sum_{n=0}^{\infty}C_S(n)q^n\  =\  \frac{1}{d_S}.$$
\end{theorem}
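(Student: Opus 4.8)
The plan is to produce a closed form for $F_S(q)$ that is valid on the open disk $|q|<1$, and then to read off its radial limit by recognizing the factor $(1-q)f_S(q)$ as an Abel sum that computes the density $d_S$. First I would pass from $C_S(n)$ to the composition generating function. By the Proposition, $C_S(n)=\sum_{\gamma\in\mathcal C_S,\,|\gamma|\le n}(-1)^{\ell(\gamma)}$, so setting $c_m:=\sum_{\gamma\in\mathcal C_S,\,|\gamma|=m}(-1)^{\ell(\gamma)}$ we have $C_S(n)=\sum_{m=0}^n c_m$; that is, $(C_S(n))_n$ is the partial-sum sequence of $(c_m)_m$. The series $g_S(q):=\sum_{m\ge 0}c_m q^m$ factors by length, since a composition with parts in $S$ of length $k$ contributes $\big(\sum_{s\in S}q^s\big)^k$, giving $g_S(q)=\sum_{k\ge 0}(-1)^k\big(\sum_{s\in S}q^s\big)^k=1/f_S(q)$; the geometric series converges near $q=0$, and the identity extends to all of $|q|<1$ because $f_S$ is assumed analytic and zero-free there. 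The elementary partial-sum identity $\sum_n\big(\sum_{m=0}^n c_m\big)q^n=g_S(q)/(1-q)$ then yields the key closed form
\[
F_S(q)=\frac{g_S(q)}{1-q}=\frac{1}{(1-q)\,f_S(q)},
\]
which is analytic on $|q|<1$; this simultaneously identifies the domain of convergence of $F_S$.

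Second, I would analyze $(1-q)f_S(q)$ as $q\to 1^-$. The constant term of $f_S$ contributes $(1-q)\cdot 1\to 0$, so the whole problem reduces to showing $\lim_{q\to 1^-}(1-q)\sum_{n\in S}q^n=d_S$. Writing $a(n)=1$ for $n\in S$ and $a(n)=0$ otherwise, with partial sums $A(N)=\#\{n\in S:\,n\le N\}$ so that $d_S=\lim_N A(N)/N$, this is the classical Abelian statement that a convergent natural density is recovered by Abel summation. I would prove it directly from the rearrangement $\sum_n a(n)q^n=(1-q)\sum_N A(N)q^N$, which gives $(1-q)\sum_n a(n)q^n=(1-q)^2\sum_N A(N)q^N$, and then compare against $(1-q)^2\sum_N N q^N=q$ using $A(N)=d_S\,N+o(N)$ together with a standard $\varepsilon$-splitting of the sum into a finite head and a tail. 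Combining the two steps gives $\lim_{q\to 1^-}(1-q)f_S(q)=d_S$, and since the hypothesis $d_S>0$ makes this limit a finite nonzero number, the closed form yields $\lim_{q\to 1^-}F_S(q)=1/d_S$, as claimed.

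I expect the main obstacle to be precisely the second step: converting the density hypothesis into the radial limit of $(1-q)f_S(q)$. The generating-function bookkeeping of the first step is routine once the Proposition is available, and the analyticity/zero-free hypothesis on $f_S$ is exactly what licenses treating $F_S=1/((1-q)f_S)$ as a genuine analytic function on the open disk rather than a merely formal series. The delicate point is controlling the $o(N)$ error in $A(N)=d_S\,N+o(N)$ uniformly enough as $q\to 1^-$ so that it does not survive after multiplication by $(1-q)^2$; the cleanest route I see is the two-fold summation above combined with the exact evaluation $\sum_{N\ge 0}N q^N=q/(1-q)^2$, which reduces the estimate to a transparent tail bound.
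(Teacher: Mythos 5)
Your proposal is correct and follows essentially the same route as the paper: both arrive at the closed form $F_S(q)=1/\left((1-q)f_S(q)\right)$ on the open disk (the paper via a multinomial/partition expansion of $1/f_S(q)$, you via the equivalent geometric series over composition lengths, with the same analytic-continuation step using zero-freeness and uniqueness of the Maclaurin series) and then evaluate the radial limit of $(1-q)f_S(q)$ as the density $d_S$. The only real difference is that the paper invokes Frobenius's Abelian theorem for $\lim_{q\to 1}(1-q)f_S(q)=d_S$, whereas you prove that step directly by partial summation and an $\varepsilon$-splitting against $(1-q)^2\sum_N Nq^N=q$; both are valid.
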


Theorem \ref{thm1} has something of an analytic converse. 

\begin{theorem}\label{thm2}  Let $S\subseteq \mathbb N$ such that $F_S(q)=\sum_{n=0}^{\infty}C_S(n)q^n$ is analytic and has no zeros on $\{q\in \mathbb C : |q|<1\}$, and such that 
$\lim_{q\to 1}F_S(q)=L$ exists as $q\to 1$ radially. If the limit $L$ is infinite, then $d_S=0$. If $L\geq 1$ is finite, then $d_S=1/L$. The case $L<1$ cannot occur.
\end{theorem}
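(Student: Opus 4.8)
The plan is to run the argument behind Theorem~\ref{thm1} in reverse, converting an Abelian input into a Tauberian conclusion. The engine is the closed form
$$F_S(q)=\frac{1}{(1-q)\,f_S(q)},$$
valid on $\{|q|<1\}$. Indeed, expanding $1/f_S(q)=\bigl(1+\sum_{n\in S}q^n\bigr)^{-1}=\sum_{\gamma\in\mathcal C_S}(-1)^{\ell(\gamma)}q^{|\gamma|}$ as a geometric series shows, in view of the Proposition, that the $q^m$-coefficient of $1/f_S(q)$ is $\sum_{|\gamma|=m}(-1)^{\ell(\gamma)}$; since $C_S(n)$ is the $n$-th partial sum of these coefficients, applying the partial-sum operator (multiplication by $(1-q)^{-1}$) yields $F_S(q)=\frac{1}{(1-q)f_S(q)}$ as a formal identity near $q=0$. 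Under the hypotheses of Theorem~\ref{thm2}, $1/F_S$ is analytic on the disk while $f_S$ is analytic there, so the identity propagates to all of $\{|q|<1\}$ by the identity theorem; in particular $f_S$ is zero-free on the disk.

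Next I would dispose of the sign claim and set up the limit. Because every part $n\in S$ satisfies $n\ge 1$, for $q\in(0,1)$ we have $f_S(q)=1+\sum_{n\in S}q^n\le 1+\sum_{n\ge1}q^n=\frac{1}{1-q}$, whence $(1-q)f_S(q)\le 1$ and therefore $F_S(q)=\frac{1}{(1-q)f_S(q)}\ge 1$. Passing to the radial limit forces $L\ge 1$, so the case $L<1$ cannot occur. To identify $d_S$ from $L\in[1,\infty]$, rewrite the identity as $(1-q)f_S(q)=1/F_S(q)$ and let $q\to1^-$ radially; the term $(1-q)\cdot 1\to 0$ drops out, leaving
$$\lim_{q\to1^-}(1-q)\sum_{n\in S}q^n=\frac{1}{L},$$
with the convention $1/\infty=0$ when $L$ is infinite.

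The final and main step is Tauberian. Writing $a_n=\mathbf 1_S(n)\ge 0$, the displayed limit says precisely that $\sum_n a_n q^n\sim \frac{1}{L(1-q)}$ as $q\to1^-$. I would invoke Karamata's Tauberian theorem in the case $\rho=1$ (see e.g.\ Korevaar or Feller), whose Tauberian side-condition is exactly the nonnegativity of the $a_n$, to conclude $\#\{n\in S:n\le N\}=\sum_{n\le N}a_n\sim \frac{N}{L\,\Gamma(2)}=\frac{N}{L}$, i.e.\ $d_S=1/L$; when $L=\infty$ the $\ell=0$ (little-$o$) form of the theorem gives $\#\{n\in S:n\le N\}=o(N)$, hence $d_S=0$. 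Since $\Gamma(2)=1$ and $d_S\le 1$ is automatic, this reproduces the asserted values and is consistent with $L\ge 1$ in the finite case. The crux—indeed the only nontrivial ingredient—is this Tauberian passage: the hypothesis delivers merely an Abel-type limit, and without the sign condition $a_n\ge 0$ one could not recover the genuine natural density; everything else is the bookkeeping identity together with the elementary bound $F_S\ge 1$.
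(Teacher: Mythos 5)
Your proposal is correct, and in the decisive step it takes a genuinely different --- and logically stronger --- route than the paper. Both arguments hinge on the same identity $F_S(q)=\bigl((1-q)f_S(q)\bigr)^{-1}$ on the unit disk; the paper gets it by uniqueness of the Maclaurin expansion of $1/F_S$ (recycling the computation from Theorem \ref{thm1}), while you rederive it from the composition expansion of $1/f_S$ plus the partial-sum operator, which is the same bookkeeping. The divergence is in how the density is extracted. The paper cites Frobenius, which is an \emph{Abelian} theorem (Ces\`aro/density convergence implies Abel convergence): reading off ``$d_S$ equals the radial limit of $(1-q)f_S(q)$'' from it tacitly presupposes that $d_S$ exists, and the paper's exclusion of $L<1$ carries the same caveat (``if the limit exists''). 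You instead invoke Karamata's Tauberian theorem, whose side condition is exactly the nonnegativity of $\mathbf 1_S(n)$; this genuinely \emph{proves} that $\#\{n\in S:n\le N\}\sim N/L$ (respectively $o(N)$ when $L=\infty$), i.e.\ it establishes the existence of $d_S$ rather than assuming it, which is part of what the theorem asserts. Your elimination of $L<1$ via the elementary pointwise bound $f_S(q)\le (1-q)^{-1}$, hence $F_S(q)\ge 1$ on $(0,1)$, is also cleaner and unconditional compared with the paper's second appeal to Frobenius. The only costs of your route are the heavier external input (Karamata in place of the single reference already used throughout the paper) and the mild care needed to justify the little-$o$ form of Karamata in the $L=\infty$ case, which you correctly flag.
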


We postpone proofs until Section \ref{sect2}. Here is an example using Theorem \ref{thm1}. 
\begin{example}\label{cor1} For $t, r\in\mathbb N$, with $r\leq t$, let $S_{r, t}$ denote the set of positive integers congruent to $r$ modulo $t$. Then  as $q\to 1$ radially, we have
\begin{equation*}\lim_{q\to 1}\  \sum_{n=0}^{\infty}C_{S_{r,t}}(n)q^n\  = t.\end{equation*}
We confirm that $d_{S_{r, t}}=1/t>0$. Moreover, since $f_{S_{r, t}}(q)=1+\sum_{n\in S_{r, t}}q^n=1+\sum_{n\geq 0}q^{r+nt}=(1+q^r-q^t)/(1-q^t)$ is analytic on $|q|<1$ with no zeros in the unit disk, then the series satisfies the analytic conditions in Theorem \ref{thm1}. The right-hand side gives $1/d_{S_{r, t}}=t$.  Note that this limit can be seen directly, as 
$\lim_{q\to 1} (1-q) (1 + q^r - q^t)/ (1-q^t) = 1/t$
by L'H\^{o}pital's rule. 
\end{example}

\section{Proofs of Theorem \ref{thm1} and Theorem \ref{thm2}}\label{sect2}

We prove the theorems using the multinomial theorem, geometric series and the Cauchy product formula for power series, together with a theorem of Frobenius \cite{Frobenius}.  Our central lemma expresses the co\"efficients of the reciprocal of
a power series as a sum over partitions.  

\begin{lemma}\label{lemma1}
For $a_i\in\mathbb C, a_0\neq 0$, let $f(q):=\sum_{n\geq 0}a_n q^n$ be analytic on $\{q\in \mathbb C : |q|<1\}$. Then on the domain of analyticity of $\phi(q):=1/f(q)$ we have 
\[ \phi(q) = \frac{1}{f(q)}\  =\  \sum_{n=0}^{\infty}c_n q^n ,\  \  \mbox{where} \ 
c_n\  =\ 
{\sum_{\substack{\lambda\in\mathcal{P}\\|\lambda| = n }}}
\frac{(-1)^{\ell(\lambda)}\ell(\lambda)!\  a_1^{m_1}a_2^{m_2}a_3^{m_3}\cdots a_n^{m_n}}{a_0^{\ell(\lambda)+1}\  m_1!m_2! m_3!\cdots m_n!}. 
\]
\end{lemma}

\begin{proof}[Proof of Lemma \ref{lemma1}]
The result is equivalent to \cite[Thm. 3.1]{Salem}, 
which is proved using the Maclaurin expansion $\phi(q)=1/f(q)=\sum_{n\geq 0}\phi^{(n)}(0) q^n/n!=\sum_{n\geq 0}c_n q^n$. 
For completeness, we give a self-contained proof of the identity for $c_n$. Begin with the {\it multinomial theorem} (see e.g. \cite{multinomial}), written as a sum over partitions $\lambda$ with {\it largest part} $\operatorname{lg}(\lambda)\leq k,\  \operatorname{lg}(\emptyset):=0,$ and length $\ell(\lambda)=r$: 
\begin{equation}\label{Dmultinomial}
(x_1+x_2+x_3+\dots+x_k)^r=r! \sum_{\substack{0\leq \operatorname{lg}(\lambda)\leq k\\\ell(\lambda)=r}} \frac{x_1^{m_1}x_2^{m_2}x_3^{m_3}\cdots x_k^{m_k}}{m_1!\  m_2!\  m_3!\  \cdots \  m_k!}.
\end{equation}
Make the substitution $x_i = \frac{a_i}{a_0} q^i$ with $a_i$ as in Lemma \ref{lemma1}. We now let $k$ tend to infinity; if $g(q):=\frac{a_1}{a_0} q+\frac{a_2}{a_0} q^2+\frac{a_3}{a_0} q^3+\dots=a_0^{-1}f(q)-1$ converges absolutely on $|q|<1$, then \eqref{Dmultinomial} becomes \begin{equation}\label{Dmultinomial2}
\left(g(q)\right)^r= \frac{r!}{a_0^{r}} \sum_{\ell(\lambda)=r} q^{|\lambda|}\frac{a_1^{m_1}a_2^{m_2}a_3^{m_3}\cdots}{m_1!\  m_2!\  m_3!\  \cdots }.
\end{equation}
For $q\in\mathbb C$ such that $|g(q)|<1$, multiplying both sides of \eqref{Dmultinomial2} by $(-1)^r$ and summing over $r\geq 0$ gives an infinite geometric series on the left, and a sum over all partitions on the right:
\begin{equation}\label{Dmultinomial3}
\frac{1}{1+g(q)}\  =\  \frac{a_0}{f(q)}\  =\  \sum_{\lambda\in\mathcal P} q^{|\lambda|}\frac{(-1)^{\ell(\lambda)}\ell(\lambda)!\  a_1^{m_1}a_2^{m_2}a_3^{m_3}\cdots  }{a_0^{\ell(\lambda)}\  m_1!\  m_2!\  m_3!\  \cdots }.
\end{equation}
Dividing through by $a_0$, then collecting co\"{e}fficients of $q^n$ on the right-hand side to write $1/f(q)=\sum_{n\geq 0}c_n q^n$, proves the identity for $c_n$. 
While the geometric series representation of $a_0/f(q)$ is only valid when $|g(q)|<1$,  the formula for $c_n=\phi^{(n)}(0) /n!$ holds on the domain of analyticity of $\phi(q)=1/f(q)$ by uniqueness of the Maclaurin series representation of the analytic function, noting $f(0)\neq 0$ so $1/f(q)$ can be expressed as a power series centered at $q=0$.
\end{proof}

\begin{lemma}\label{lemma2}
Let $a_i\in\mathbb C, a_0\neq 0,$ be such that $f(q)=\sum_{n\geq 0}a_n q^n$ is analytic and has no zeros on $\{q\in \mathbb C : |q|<1\}$. Define $c_n$ as in Lemma \ref{lemma1}. Define $A(n):=\sum_{i=0}^n a_i,\  C(n):=\sum_{i=0}^n c_i$, and let $A:=\lim_{n\to \infty} {A(n)}/{n}$  if the limit exists. Then if $A\neq 0$, as $q\to 1$ radially we have that 
$$\lim_{q\to 1}\  \sum_{n=0}^{\infty} C(n)q^n\  =\  \frac{1}{A}.$$
\end{lemma}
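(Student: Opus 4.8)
The plan is to put the target generating function into closed form and then read off its value at $q=1$ from an Abelian/Tauberian statement of Frobenius. First I would record the standard partial-sum identity for power series, which is just a Cauchy product with the geometric series: multiplying a power series by $1/(1-q)$ accumulates its coefficients, so that $\sum_{n\geq 0} C(n)q^n = \frac{1}{1-q}\sum_{n\geq 0} c_n q^n$. By Lemma~\ref{lemma1}, the inner series equals $1/f(q)$, and this is valid on $|q|<1$ because $f$ is analytic and zero-free there, so $1/f$ is analytic on the disk and agrees with its convergent Maclaurin series $\sum c_n q^n$. Hence on $|q|<1$ one has the clean identity
$$\sum_{n\geq 0} C(n)q^n\ =\ \frac{1}{(1-q)f(q)},$$
and the whole problem reduces to evaluating $\lim_{q\to 1}(1-q)f(q)$ along the radius.

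Next I would invoke Frobenius's theorem to compute this limit. The point is to reinterpret the hypothesis $A=\lim_{n\to\infty}A(n)/n$ as a statement about the Cesàro averages of the coefficient sequence $(a_n)$: since $A(n)=\sum_{i=0}^n a_i$ is exactly the $n$-th partial sum of $\sum a_i$, the ratio $A(n)/(n+1)$ is its $n$-th Cesàro $(C,1)$ mean, and $A(n)/(n+1)\to A$ because $A(n)/n\to A$. Frobenius's theorem then upgrades this Cesàro convergence to the corresponding Abel-type radial limit, giving $\lim_{q\to 1^-}(1-q)\sum_{n\geq 0}a_n q^n = A$, that is, $(1-q)f(q)\to A$.

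Finally, since $A\neq 0$, the quantity $(1-q)f(q)$ stays bounded away from $0$ as $q\to 1$ radially, so its reciprocal converges and
$$\lim_{q\to 1}\sum_{n\geq 0}C(n)q^n\ =\ \lim_{q\to 1}\frac{1}{(1-q)f(q)}\ =\ \frac{1}{A},$$
as claimed.

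I expect the main obstacle to be the bookkeeping at the interface with Frobenius's theorem: one must phrase the hypothesis in exactly the summability form the theorem consumes, namely $(C,1)$-summability of the sequence $(a_n)$ to $A$ (equivalently, of the telescoping series $\sum(a_n-a_{n-1})$), and confirm that the radial limit supplied by the theorem coincides with the radial limit asserted in the lemma. A secondary point requiring care is justifying the two generating-function identities on the correct domain of convergence and verifying that $A\neq 0$ genuinely licenses passing the limit through the reciprocal.
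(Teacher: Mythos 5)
Your proposal is correct and follows essentially the same route as the paper: both arguments combine the Cauchy product identity $\sum_{n\geq 0}C(n)q^n = \frac{1}{(1-q)f(q)}$ (via Lemma~\ref{lemma1} and the zero-free hypothesis) with Frobenius's theorem giving $\lim_{q\to 1}(1-q)f(q)=A$ radially, then take reciprocals using $A\neq 0$. Your explicit framing of the hypothesis as Ces\`aro $(C,1)$ convergence of $(a_n)$ and the remark that $A\neq 0$ keeps $(1-q)f(q)$ bounded away from zero are slightly more careful than the paper's write-up, but the substance is identical.
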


\begin{proof}[Proof of Lemma \ref{lemma2}]
This lemma can be obtained as a special case of the first asymptotic formula in \cite{Hardy}; for 
completeness, we prove it directly. For the claimed limit to exist as $q\to 1$, we need the Maclaurin 
series for $\phi(q)=1/f(q)$ convergent on the unit disk $|q|<1$. Take $\varepsilon>0$ and suppose 
that $f(q_0)=0$ for some $q_0$ with $|q_0|<1-\varepsilon$. Then $\phi(q_0)=1/f(q_0)$ represents a 
pole, so the series $\phi(q)$ has radius of convergence $\leq 1-\varepsilon$ and the limit as $q\to 1$ 
does not exist. Hence the conditions on the domain and zeros are necessary. That this limit is equal 
to $1/A$ follows from \cite{Frobenius}, which proves when $q\to 1$ radially from within the unit 
disk\footnote{In fact, this limiting result holds if $q\to 1$ through any path in a {Stolz sector} of the unit 
disk (see e.g. \cite{Stolz}), a region with vertex at $q=1$ such that $\frac{|1-q|}{1-|q|}\leq M$ for some 
$M> 0$.} that  
\begin{equation}\label{Frob2} \lim_{q\to 1}(1-q)f(q)\  =\  A.\end{equation} 
Noting that $1/(1-q),$ $\phi(q)$, and thus $(1-q)^{-1}\phi(q)$ are analytic on $|q|<1$, then Lemma \ref{lemma1} gives 
\begin{equation}\label{Cauchy} \frac{1}{A}\  =\  \lim_{q\to 1}\frac{1}{1-q}\sum_{n\geq 0}c_nq^n\  =\  \lim_{q\to 1}\left(\sum_{n\geq 0}q^n\right)\left(\sum_{n\geq 0}c_nq^n\right)
\  =\  \lim_{q\to 1}\sum_{n\geq0} C(n)q^n,\end{equation}
with $C(n)=\sum_{0\leq i \leq n}c_i=\sum_{0\leq i \leq n}1\cdot c_i$ due to the Cauchy product formula for power series. 
\end{proof}

\begin{proof}[Proof of Theorem \ref{thm1}]
Set $a_0=1$ and for $n\geq 1$, let $a_n$ be the indicator function of $S\subseteq \mathbb N$, i.e., $a_n=1$ if $n\in S$, and $a_n=0$ if $n\not\in S$.  Since $1+\sum_{n\in S}q^n=\sum_{n\geq 0}a_n q^n$ is analytic on $|q|<1$ by comparison with geometric series, then with the stipulation it has no zeros, $f_S(q)$ satisfies the analytic conditions of Lemmas \ref{lemma1} and \ref{lemma2}.  Thus, $a_1^{m_1}a_2^{m_2}\cdots a_r^{m_r}=1$ if $\lambda\in\mathcal P_S$ and $=0$ otherwise in the expression for $c_n$ in Lemma \ref{lemma1}, which yields $C(n)=C_S(n)$ in Lemma \ref{lemma2}. Observing that $A= \lim_{n\to \infty} A(n)/n=\lim_{n\to \infty} {\# \{ i \in S : i \leq n \}}/{n} = d_S$ in Lemma \ref{lemma2} gives the theorem. \end{proof}

\begin{proof}[Proof of  Theorem \ref{thm2}]
Since $F_S(q)$ is analytic with no zeros for $|q|<1$, then $\Phi_S(q):=1/F_S(q)$ is also analytic with no zeros inside the unit disk, and has a unique power series expansion $\Phi_S(q)=\sum_{n\geq 0}d_n q^n$ around the origin. In Theorem \ref{thm1}, $1/F_S(q)$ has the power series expansion $(1-q)\left(1+\sum_{n\in S}q^n\right)=(1-q)f_S(q)$, analytic for $|q|<1$. Then by uniqueness of the power series expansion of $1/F_S(q)$ on its domain of analyticity, $\Phi_S(q)=(1-q)f_S(q)$ on the unit disk. 
If $L$ is infinite, then $\lim_{q\to 1}\Phi_S(q):=\lim_{q\to 1}1/F_S(q)=0$ which is equal to $d_S$ by \eqref{Frob2}. 
If $L\geq1$ is finite, then $1/L=\lim_{q\to 1}1/F_S(q)=\lim_{q\to 1}\Phi_S(q)=d_S$, also by \eqref{Frob2}. If the case $L<1$ were to occur under these hypotheses, it would mean $1/F_S(q)=(1-q)f_S(q)\to 1/L>1$ as $q\to 0$. But with all co\"{e}fficients of $\sum_{n\geq 0}a_n q^n = 1+\sum_{n\in S}q^n$ being $0$ or $1$, then by  \eqref{Frob2}, $\lim_{q\to 1}(1-q)f_S(q) = \lim_{N\to \infty}\frac{1}{N}\sum_{n=0}^{N-1} a_n   \leq  1,$  
if the limit exists. Thus $L<1$ cannot occur.     \end{proof}

\section{Further remarks}\label{sect3}

Let $c(n) = 2^{n-1}$ denote the number of compositions of size $n\geq 1$ ~\cite[p. 151]{compositions}, 
with $c(0):=1$, and let $c_S(n)$ denote the number of size-$n$ compositions having all parts from  
$S\subseteq \mathbb N$. Considering the results in Section \ref{Sect1}, one wonders about the 
``non-alternating'' variant of  \eqref{sum}: 
\begin{equation} C_S^+(n):=\sum_{\substack{\lambda \in \mathcal P_S\\0\leq |\lambda|\leq n}}
\frac{\ell(\lambda)!}{m_{1}!m_{2}! m_{3}!\cdots m_{n}!}=\sum_{0\leq  j \leq n}c_S(j),\end{equation} 
the number of compositions $\gamma \in \mathcal C_S$ having sizes $0\leq |\gamma|\leq n$. This is 
a fairly natural statistic, e.g. for $S=\mathbb N$ one has $C_{\mathbb N}^+(n)=\sum_{0\leq j \leq n}
c(j)=1 + (1+2+2^2+2^3+\dots+2^{n-1})=2^n$. 

What are the analytic properties of the power series  $F_S^+(q):=\sum_{n\geq 0}C_S^+(n) q^n,\  |q|
<1$? 
Let $g_S(q):=f_S(q)-1$ with $f_S(q)$ analytic on $|q|<1$. Similar multinomial, geometric series, 
Cauchy product and analytic arguments as above, applied to $\sum_{r\geq 0} 
\left(g_S(q)\right)^r, |g_S(q)|<1,$ prove the generating function formula   $\sum_{n\geq 0}
c_S(n)q^n=\sum_{\gamma\in\mathcal C_S}q^{|\gamma|}=(2-f_S(q))^{-1}=(1-g_S(q))^{-1}$  (see e.g.  
\cite[Thm. 1.1]{gf}), 
for $q\in\mathbb C$ such that $|f_S(q)|<2$. Then by \eqref{Dmultinomial3}, together with the Cauchy 
product for power series as used  in \eqref{Cauchy} above, we deduce the identity 
\begin{equation}\label{equality2}
F_S^+(q)\  =\  \sum_{n=0}^{\infty}C_S^+(n)q^n\  =\  \frac{1}{(1-q)\left(2-f_S(q)\right)}.
\end{equation} 

However, limiting formulas analogous to Theorem \ref{thm1} do not result in this case, as  $F_S^+(q)
$ is not analytic on the unit disk. To see this, note that if $S$ is a finite nonempty subset, then $|
f_S(q)|\leq 1+\#S$ when $|q|\leq 1$ since there are $\#S$ terms of the form $q^n$, with $f_S(1)=1+
\#S$ exactly. If $S$ is an infinite subset of $\mathbb N$, then $|f_S(q)|\to \infty$ as $q\to 1$. Thus $|
f_S(q)|<2$ for all $|q|<1$ if and only if the subset $S$ has one element. For a subset $S$ with two or 
more elements, $F_S^+(q)$ converges on a disk strictly smaller than the unit disk, and $\lim_{q\to 1}
F_S^+(q)$ does not exist.

\begin{remark} It is possible to find 
composition-theoretic limiting formulas in a smaller disk, e.g.   

\begin{equation} \label{half}
\lim_{q\to 1/2}\  \frac{1-2q}{1-q}\sum_{n\in S}c(n)q^n\  =\  \lim_{q\to 1/2}\  \frac{\sum_{n\in S}c(n)q^n}{\sum_{n\geq 0}c(n)q^n}\  
=\   {d_S}.\end{equation} 

This can be deduced from \eqref{Frob2}: 
\begin{equation*}  d_S=\lim_{q\to 1}\  (1-q){\sum_{n\in S} q^n}\  =\  \lim_{q\to 1/2}\  (1-2q){\sum_{n\in S} (2q)^n}\  =\  \lim_{q\to 1/2}\ \frac{1-2q}{1-q}{\sum_{n\in S} 2^{n-1}q^n},\end{equation*} 
noting that $\sum_{n\geq 0}c(n)q^n=1+\sum_{n\geq 1}2^{n-1}q^n=(1-q)/(1-2q)$.\end{remark}

\section*{Acknowledgments}
The authors are thankful to Maurice D. Hendon for advice on convergence of complex power series 
that strengthened our proofs, and for suggesting the special case of Example \ref{cor1}; to George E. 
Andrews for noting a useful correction in an earlier draft; and to the anonymous referee for carefully 
reviewing our work.


\begin{thebibliography}{1}\footnotesize


\bibitem{And} G. E. Andrews, {\it The Theory of Partitions}, Encyclopedia of Mathematics and its Applications, no. 2, 
Addison--Wesley, Reading, Massachusetts, 1976.  Reissued,
Cambridge University Press, 1998.

\bibitem{gf} M. Bicknell, and V. E. Hoggatt, Palindromic compositions, {\it Fibonacci Quarterly} {\bf 13} (1975), 350--356.

\bibitem{B-O} S. Bloch and A. Okounkov, \emph{The character of the infinite wedge representation}, Adv. Math. {\bf 149} (2000), 1--60.

\bibitem{Fine} N. J. Fine, {\it Basic Hypergeometric Series and Applications},
Mathematical Surveys and Monographs, no. 27, American Mathematical Society, Providence, Rhode Island,1988.

\bibitem{Frobenius} G. Frobenius, \"{U}ber die Leibnitzsche Reihe, {\it J. Reine Angew. Math}. {\bf 89} (1880), 262--264. 

\bibitem{Hardy} G. H. Hardy and J. E. Littlewood, Tauberian theorems concerning power series and Dirichlet's series whose coefficients are positive, 
{\it Proc. London Math. Soc. Ser. 2} {\bf 13} (1914), 174--191.

\bibitem{multinomial} K. K. Kataria, A probabilistic proof of the multinomial theorem, 
{\it Amer. Math. Monthly} {\bf 123} (2016), 94--96.

\bibitem{compositions} P. A. MacMahon, \emph{Combinatory Analysis}, vol. I, Cambridge
University Press, 1915; Reissued (with volumes I and II bound in one volume), AMS
Chelsea, 2001.

\bibitem{MacMahon} P. A. MacMahon, \emph{Combinatory Analysis}, vol. II, Cambridge
University Press, 1916; Reissued (with volumes I and II bound in one volume), AMS
Chelsea, 2001.

\bibitem{OSW} K. Ono, R. Schneider, and I. Wagner, {Partition-theoretic formulas for arithmetic
densities}, {\it Analytic Number Theory, Modular Forms and $q$-Hypergeometric Series}, Springer Proc. 
Math. Stat. \textbf{221} (2017), 611--624.

\bibitem{OSW2} K. Ono, R. Schneider, and I. Wagner, {Partition-theoretic formulas for arithmetic
densities, II},  {\it Hardy--Ramanujan Journal} \textbf{43} (2020), 1--16.

\bibitem{Salem} A. Salem,  {Reciprocal of infinite series and partition functions}, 
{\it Integral Transforms and Special Functions} {\bf 22} (2011), 443--452.

\bibitem{Stolz} L. E. Snyder, {Continuous Stolz extensions and boundary functions}, 
{\it Trans. Amer. Math. Soc.}  {\bf 119} (1965), 417--427.

\bibitem{Zagier_bracket} D. Zagier, {Partitions, quasimodular forms, and the Bloch--Okounkov theorem}, 
{\it Ramanujan J.} {\bf 4} (2016), 345--368.


\end{thebibliography}
\end{document}